
\documentclass[12pt]{amsart}
\usepackage{amssymb,bbold}
\usepackage[all]{xy}

\textwidth = 440pt
\textheight = 630pt
\hoffset = -45pt
\voffset = -35pt
\marginparwidth = 58pt

\theoremstyle{plain}
\newtheorem{theorem}{Theorem}
\newtheorem{corollary}[theorem]{Corollary}
\newtheorem{lemma}[theorem]{Lemma}
\newtheorem{proposition}[theorem]{Proposition}

\theoremstyle{definition}
\newtheorem{definition}[theorem]{Definition}






\begin{document}
\baselineskip 18pt

\title[The Banach-Saks property from a locally solid vector lattice]
      {The Banach-Saks property from a locally solid vector lattice point of view}

\author[O.~Zabeti]{Omid Zabeti}


\address[O.~Zabeti]
  {Department of Mathematics, Faculty of Mathematics, Statistics, and Computer science,
   University of Sistan and Baluchestan, Zahedan,
   P.O. Box 98135-674. Iran}
\email{o.zabeti@gmail.com}

\keywords{Locally solid Banach-Saks property, locally solid disjoint Banach-Saks property, locally solid strong unbounded Banach-Saks property, locally solid unbounded Banach-Saks property, locally solid vector lattice.}
\subjclass[2020]{46A40}

\begin{abstract}
The aim of this note is to consider different notions for the Banach-Saks property in locally solid vector lattices  as an extension for the known concepts of the Banach-Saks property in Banach lattices. We investigate relations between them; in particular, we shall characterize spaces in which, these notions agree.
\end{abstract}

\date{\today}

\maketitle
\section{motivation and preliminaries}
Let us start with some motivation. Let $E$ be a Banach lattice. It is known that there are several non-equivalent notions related to the Banach-Saks property on $E$ which have many considerable connections together. In particular, we are able to characterize order continuity and also reflexivity of $E$ in terms of these relations ( see \cite{GTX, Z2} for more information).

 Although, Banach lattices are the most important part of the category of all locally solid vector lattices, but the general theory of locally solid vector lattices has practical applications in economics, risk measures, etc ( see \cite[Chapter 9]{AB1}). Furthermore, the set of all examples in this category is  much more than  the examples in Banach lattice theory so that in this case, we have a wider range of spaces. All of these reasons, motivate us to investigate several known notions which are defined in Banach lattices in the category of all locally solid vector lattices; one principal notion is the Banach-Saks property.

In this paper, we consider the Banach-Saks property ( in four non-equivalent ways) in the category of all locally solid vector lattices as an extension for the corresponding properties in the category of all Banach lattices. Recently, some different notions for the Banach-Saks property in Banach lattices have been considered in \cite{Z2}. Our attempt in this note is to investigate and possibly extend the related results in that paper to the category of all locally solid vector lattices.

 Now, we recall some preliminaries we need in the sequel.
A vector lattice $X$ is called {\em order complete} ( {\em $\sigma$-order complete}) if every non-empty bounded above subset ( bounded above countable subset) of $X$ has a supremum. A set $S\subseteq X$ is called a {\em solid} set if $x\in X$, $y\in S$ and $|x|\leq |y|$ imply that $x\in S$. Also, recall that a linear topology $\tau$ on a vector lattice $X$ is referred to as  {\em locally solid} if it has a local basis at zero consisting of solid sets.
Suppose $X$ is a locally solid vector lattice. A net $(x_{\alpha})\subseteq X$ is said to be unbounded convergent to $x\in X$ ( in brief, $x_{\alpha}\xrightarrow{u\tau}x$) if for each $u\in X_{+}$, $|x_{\alpha}-x|\wedge u\xrightarrow{\tau}0$; suppose $\tau_1=\|.\|$ and $\tau_2$= the absolute weak topology ( $|\sigma|(X,X')$, in notation), we write $x_{\alpha}\xrightarrow{un}x$ and $x_{\alpha}\xrightarrow{uaw}x$, respectively; for more details, see \cite{DOT,T,Z}. Observe that a locally solid vector lattice $(X,\tau)$ is said to have the {\em Levi property} if every $\tau$-bounded upward directed set in $X_{+}$ has a supremum. Moreover, recall that a locally solid vector lattice $(X,\tau)$ possesses the {\em Lebesgue property} if for every net $(u_{\alpha})$ in $X$, $u_{\alpha}\downarrow 0$ implies that $u_{\alpha}\xrightarrow{\tau}0$. Finally, observe that $(X,\tau)$ has the {\em pre-Lebesgue property} if $0\leq u_{\alpha}\uparrow\leq u$ implies that the net $(u_{\alpha})$ is $\tau$-Cauchy.
For undefined terminology and related notions, see \cite{AA,AB1,AB}. All locally solid vector lattices in this note are assumed to be Hausdorff.

\section{main result}
First, we present the main definition of the paper: The Banach-Saks property with a topological flavor; for recent progress in Banach lattices, see \cite{GTX,Z2}.

\begin{definition}
Suppose $(X,\tau)$ is a locally solid vector lattice. $X$ is said to have
\begin{itemize}
		\item[\em (i)] {The Banach-Saks property ( {\bf LSBSP}, for short) if every  $\tau$-bounded sequence $(x_n)\subseteq X$ has a subsequence $(x_{n_k})$ whose Ces\`{a}ro means is $\tau$-convergent; that is $(\frac{1}{n}\Sigma_{k=1}^{n}x_{n_k})$ is $\tau$-convergent.}
\item[\em (ii)]
	{ The strong unbounded Banach-Saks property ( {\bf LSSUBSP}, in notation) if every  $\tau$-bounded $u\tau$-null sequence $(x_n)\subseteq X$ has a subsequence $(x_{n_k})$ whose Ces\`{a}ro means is $\tau$-convergent.}
\item[\em (iii)]
	{ The unbounded Banach-Saks property ( {\bf LSUBSP}, in notation) if every  $\tau$-bounded $u|\sigma|(X,X')$-null sequence $(x_n)\subseteq X$ has a subsequence $(x_{n_k})$ whose Ces\`{a}ro means is $\tau$-convergent.}
		\item[\em (iv)] {The disjoint Banach-Saks property ( {\bf LSDBSP}, in brief) if every  $\tau$-bounded disjoint sequence $(x_n)\subseteq X$ has a subsequence $(x_{n_k})$ whose Ces\`{a}ro means is $\tau$-convergent.}
		
			\end{itemize}
\end{definition}
It is clear that {\bf LSBSP} implies {\bf LSSUBSP}, {\bf LSUBSP} and {\bf LSDBSP}.
Now, we extend \cite[Theorem 3.2]{DOT} to the category of all Fr$\acute{e}$chet spaces. The proof is essentially the same; just, it is enough to replace norm with the compatible translation-invariant metric.  Recall that a metrizable locally solid vector lattice $X$ is a Fr$\acute{e}$chet space if it is complete.
\begin{lemma}\label{701}
Suppose $(X, \tau)$ is a Fr$\acute{e}$chet space and $(x_{\alpha})$ is a $u\tau$-null net in $X$. Then there exists an increasing sequence of indices $(\alpha_k)$ and a disjoint sequence $(d_k)$ such that $x_{\alpha_k}-d_k\xrightarrow{\tau} 0$.
\end{lemma}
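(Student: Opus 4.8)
The plan is to transcribe the disjointification argument for $\tau$-bounded $u\tau$-null nets into the metrizable setting by replacing the lattice norm with a translation-invariant solid $F$-norm. Since $(X,\tau)$ is a metrizable locally solid vector lattice, I would fix such an $F$-norm $\rho$ inducing $\tau$, so that $\rho(x)=\rho(\abs{x})$, $\rho$ is subadditive, monotone on the positive cone ($0\le x\le y\Rightarrow\rho(x)\le\rho(y)$), and $\rho(x)=0$ iff $x=0$ (using that $\tau$ is Hausdorff). Because $x_\alpha\xrightarrow{u\tau}0$ means $\abs{x_\alpha}\wedge u\xrightarrow{\tau}0$ for every $u\in X_+$, the net $y_\alpha:=\abs{x_\alpha}$ is again $u\tau$-null, so I would first produce the disjoint sequence in the positive case and restore the signs at the very end.

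First I would select the indices greedily. Having chosen $\alpha_1<\dots<\alpha_{k-1}$, I apply $u\tau$-nullity to the fixed positive vector $u_{k-1}:=y_{\alpha_1}\vee\dots\vee y_{\alpha_{k-1}}$ and choose $\alpha_k$ (beyond $\alpha_{k-1}$ in the directed index set) with $\rho\bigl(y_{\alpha_k}\wedge u_{k-1}\bigr)<2^{-k}$. In particular $\rho(y_{\alpha_k}\wedge y_{\alpha_j})<2^{-k}$ for every $j<k$, so all pairwise overlaps are summable; writing $y_k:=y_{\alpha_k}$, for each fixed $k$ one obtains $\sum_{j\ne k}\rho(y_k\wedge y_j)\le k\,2^{-k}<\infty$.

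The heart of the argument — and the step I expect to be the main obstacle — is upgrading this \emph{asymptotic} disjointness to a genuinely disjoint sequence lying $\tau$-close to $(y_k)$. For each $k$ set $q_k:=\sup_{j\ne k}(y_k\wedge y_j)$. Here completeness enters decisively: the finite partial suprema increase and, being dominated by the tails of the convergent series $\sum_{j\ne k}\rho(y_k\wedge y_j)$, are $\tau$-Cauchy, hence $\tau$-convergent; since $(X,\tau)$ is Hausdorff and locally solid, an increasing $\tau$-convergent net converges to its supremum, so $q_k$ exists, $0\le q_k\le y_k$, and $\rho(q_k)\le\sum_{j\ne k}\rho(y_k\wedge y_j)\le k\,2^{-k}$. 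Now define $d_k:=y_k-q_k\ge 0$. On one hand $y_k-d_k=q_k\xrightarrow{\tau}0$. On the other hand, $q_k\ge y_k\wedge y_l$ forces $d_k\le y_k-y_k\wedge y_l=(y_k-y_l)^+$ for every $l\ne k$, so $0\le d_k\wedge d_l\le(y_k-y_l)^+\wedge(y_l-y_k)^+=0$; that is, $(d_k)$ is disjoint.

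It then remains to reinstate the signs. Using $0\le d_k\le y_k=x_{\alpha_k}^+ + x_{\alpha_k}^-$ with $x_{\alpha_k}^+\perp x_{\alpha_k}^-$, I would split $d_k=d_k\wedge x_{\alpha_k}^+ + d_k\wedge x_{\alpha_k}^-$ and put $\tilde d_k:=d_k\wedge x_{\alpha_k}^+ - d_k\wedge x_{\alpha_k}^-$. Then $\abs{\tilde d_k}=d_k$, so $(\tilde d_k)$ is still disjoint, while $\abs{x_{\alpha_k}-\tilde d_k}=y_k-d_k=q_k$, whence $\rho(x_{\alpha_k}-\tilde d_k)=\rho(q_k)\to 0$ by solidity. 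Thus $(\alpha_k)$ and $(\tilde d_k)$ are as required. The only ingredients beyond the Banach-lattice proof are the replacement of the norm by $\rho$ and the use of topological completeness of the Fr\'echet space to guarantee the suprema $q_k$; every lattice identity invoked is valid in an arbitrary vector lattice.
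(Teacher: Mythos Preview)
Your proposal is correct and is precisely what the paper does: it gives no detailed proof, merely stating that the argument of \cite[Theorem~3.2]{DOT} goes through verbatim once the lattice norm is replaced by a compatible translation-invariant metric (equivalently, your solid $F$-norm $\rho$). Your write-up is a faithful, fully expanded execution of that plan, including the use of completeness to secure the suprema $q_k$ and the sign-restoration step at the end.
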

Note that metrizability is essential in Lemma \ref{701} and can not be removed. Consider $X=\ell_1$ with the absolute weak topology. Put $u_n=(\underbrace{0,\ldots,0}_{n-times},\underbrace{n,\ldots,n}_{n-times},0,\ldots)$. It is easy to see that $u_n\rightarrow0$ in the unbounded absolute weak topology ( $uaw$-topology); for more details, see \cite{Z}. Now, suppose $(d_n)$ is any disjoint sequence in $X$. It is easy to see that the sequence $(x_n-d_n)$ has at least one component with value $n$. This means that $x_n-d_n\nrightarrow 0$ in the weak topology so that in the absolute weak topology.

In this step, let us recall definition of the $AM$-property; for more details, see \cite{Z1}.
Suppose $(X,\tau)$ is a locally solid vector lattice. We say that $X$ has the  $AM$-property provided that for every bounded set $B\subseteq X$, $B^{\vee}$ is also bounded with the same scalars; namely, given a zero neighborhood $V$ and any positive scalar $\alpha$ with $B\subseteq \alpha V$, we also have  $B^{\vee}\subseteq \alpha V$. Note that by $B^{\vee}$, we mean the set of all finite suprema of elements of $B$. Observe that $B^{\vee}$ can be considered as an increasing net in $X$.
\begin{lemma}\label{5001}
Suppose $X$ is a locally solid vector lattice with the $AM$-property and $U$ is an arbitrary solid zero neighborhood in $X$. Then, for each $m\in \Bbb N$, $U\vee\ldots\vee U=U$, in which $U$ is appeared $m$-times.
\end{lemma}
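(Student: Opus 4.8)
The plan is to establish the two inclusions separately; the inclusion $U\vee\ldots\vee U\subseteq U$ is the substantive one and follows from a single application of the $AM$-property to a finite bounded set, while the reverse inclusion is essentially formal.

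For the easy inclusion $U\subseteq U\vee\ldots\vee U$, I would observe that any $u\in U$ equals the supremum $u\vee\ldots\vee u$ of $m$ copies of itself, so $u$ belongs to $U\vee\ldots\vee U$. Nothing beyond idempotency of the finite lattice supremum is needed here.

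For the reverse inclusion, I would fix an arbitrary element $u_1\vee\ldots\vee u_m$ of $U\vee\ldots\vee U$, where $u_1,\ldots,u_m\in U$, and set $B=\{u_1,\ldots,u_m\}$. Being finite, $B$ is $\tau$-bounded, and plainly $B\subseteq U$, that is, $B\subseteq 1\cdot U$. Invoking the $AM$-property with the zero neighborhood $V=U$ and the scalar $\alpha=1$ then gives $B^{\vee}\subseteq U$. Since $u_1\vee\ldots\vee u_m$ is itself a finite supremum of elements of $B$, it lies in $B^{\vee}$ and hence in $U$. Combining both inclusions yields $U\vee\ldots\vee U=U$, as claimed.

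The proof is short, so the only real step is recognizing that the $AM$-property should be fed the finite set $\{u_1,\ldots,u_m\}$ (which is automatically bounded, so that the property applies) rather than the unbounded neighborhood $U$ itself; once this is seen, the conclusion is immediate, with $U$ playing the role of the dilate $\alpha V$ for $\alpha=1$. I expect no genuine obstacle, and I note in passing that solidity of $U$ is not actually exploited in the argument---only that $U$ is a zero neighborhood matters---so the statement in fact holds verbatim for any zero neighborhood.
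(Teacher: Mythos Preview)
Your argument is correct and mirrors the paper's proof almost exactly: the substantive inclusion $U\vee\cdots\vee U\subseteq U$ is obtained in both cases by applying the $AM$-property with $\alpha=1$ to the finite (hence bounded) set $\{u_1,\ldots,u_m\}\subseteq U$. For the trivial inclusion the paper writes $x=x\vee 0\vee\cdots\vee 0$ after a reduction to $x\ge 0$, whereas your use of idempotency $u=u\vee\cdots\vee u$ is cleaner and, as you note, shows that solidity of $U$ is not actually needed.
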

\begin{proof}
Suppose $x\in U$; without loss of generality, we may assume that $x\geq 0$ so that $x=x\vee 0\ldots\vee 0$. Therefore, $x\in U\vee\ldots\vee U$. For the other direction, suppose that $x_1,\ldots,x_m\ \in U$. Put $F=\{x_1,\ldots,x_m\}$; $F$ is bounded so that by definition of the $AM$-property, $x_1\vee\ldots\vee x_m \in U$.
\end{proof}
\begin{lemma}\label{702}
Suppose $X$ is a locally solid vector lattice with the $AM$-property. Then $X$ possesses the {\bf LSDBSP}.
\end{lemma}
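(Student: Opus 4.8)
The plan is to show that the entire disjoint sequence already has $\tau$-convergent Ces\`aro means, so that no genuine passage to a subsequence is required (one may simply take $n_k=k$). Fix a $\tau$-bounded disjoint sequence $(x_n)\subseteq X$. Since $\tau$ is locally solid, the set $B=\{|x_n|\mid n\in\Bbb N\}$ is again $\tau$-bounded.

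The first key step is the lattice identity for disjoint elements: for pairwise disjoint $x_1,\ldots,x_n$ one has $|x_1+\cdots+x_n|=|x_1|\vee\cdots\vee|x_n|$. I would establish this by induction, using $a+b=a\vee b+a\wedge b$ for the two-term case and the distributivity $(\bigvee_i a_i)\wedge b=\bigvee_i(a_i\wedge b)$ to see that $|x_1|\vee\cdots\vee|x_{n-1}|$ remains disjoint from $|x_n|$. Consequently $\left|\sum_{k=1}^n x_k\right|=\bigvee_{k=1}^n|x_k|\in B^{\vee}$ for every $n$.

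Now I invoke the $AM$-property. Given an arbitrary solid zero neighborhood $V$, boundedness of $B$ yields a positive scalar $\alpha$ with $B\subseteq\alpha V$, and the $AM$-property furnishes the same inclusion for the finite suprema, namely $B^{\vee}\subseteq\alpha V$. Hence $\bigvee_{k=1}^n|x_k|\in\alpha V$ for all $n$, and therefore $\frac{1}{n}\left|\sum_{k=1}^n x_k\right|\in\frac{\alpha}{n}V$. Since every solid set is balanced (if $y\in V$ and $|\lambda|\le 1$ then $|\lambda y|=|\lambda||y|\le|y|$), for each $n\ge\alpha$ we have $\frac{\alpha}{n}V\subseteq V$, whence $\left|\frac{1}{n}\sum_{k=1}^n x_k\right|\in V$; solidity of $V$ then gives $\frac{1}{n}\sum_{k=1}^n x_k\in V$. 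As the solid zero neighborhoods form a base at zero, this shows $\frac{1}{n}\sum_{k=1}^n x_k\xrightarrow{\tau}0$, and the same computation applies verbatim to any subsequence, which is again disjoint.

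The argument is essentially routine once the disjointness identity is in place; the only point demanding genuine care is the correct use of the ``with the same scalars'' clause in the definition of the $AM$-property, since this is exactly what converts the uniform boundedness of the partial suprema $\bigvee_{k=1}^n|x_k|$ into the $\tfrac1n$-decay of the Ces\`aro means. I expect no serious obstacle beyond recording that solid neighborhoods are balanced and form a base at zero.
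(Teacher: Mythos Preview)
Your argument is correct and follows essentially the same route as the paper: use disjointness to rewrite the partial sums in terms of finite suprema, invoke the $AM$-property to bound these suprema uniformly by $\alpha V$, and then divide by $n$. Your treatment is in fact slightly more careful than the paper's, which writes $\sum_{k=1}^n x_{n_k}=\bigvee_{k=1}^n x_{n_k}$ directly (valid only for positive elements), whereas you correctly pass through $\bigl|\sum x_k\bigr|=\bigvee|x_k|$ and use solidity of $V$.
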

\begin{proof}
Suppose $(x_{n})$ is a $\tau$-bounded disjoint sequence in $X$ and $V$ is an arbitrary solid zero neighborhood in $X$. There exists a positive scalar $\gamma$ with $(x_n)\subseteq \gamma V$. Assume that $B$ is the set of all finite suprema of elements of $(x_n)$. By the $AM$-property, $B$ is also bounded and $B\subseteq \gamma V$. Therefore, for any subsequence $(x_{n_k})$ of $(x_n)$, we have
\[\frac{1}{n}\Sigma_{k=1}^{n}x_{\alpha_k}=\frac{1}{n}\bigvee_{k=1}^{n}x_{\alpha_k}\in \frac{\gamma}{n}V\subseteq V,\]
for sufficiently large $n$. This would complete the proof.
\end{proof}
\begin{proposition}\label{103}
Suppose $X$ is a $\sigma$-order complete locally solid vector lattice. Then {\bf LSUBSP} in $X$ implies {\bf LSDBSP}.
\end{proposition}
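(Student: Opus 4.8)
The plan is to show that under $\sigma$-order completeness every $\tau$-bounded disjoint sequence is automatically $u|\sigma|(X,X')$-null; once this is in hand, {\bf LSUBSP} applies verbatim and returns a subsequence with $\tau$-convergent Ces\`{a}ro means, which is exactly what {\bf LSDBSP} asks for. So the whole argument reduces to the implication ``$\tau$-bounded disjoint $\Rightarrow$ $u|\sigma|(X,X')$-null''.

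First I would fix a $\tau$-bounded disjoint sequence $(x_n)$ and an arbitrary $u\in X_+$, and set $y_n:=|x_n|\wedge u$. Then $(y_n)$ is again disjoint, since $y_n\le |x_n|$, and it is order bounded with $0\le y_n\le u$. The structural point I would exploit is that for disjoint positive vectors finite suprema coincide with finite sums, so $\bigvee_{k=1}^N y_k=\sum_{k=1}^N y_k$; combining this with $y_k\le u$ gives $\sum_{k=1}^N y_k=\bigvee_{k=1}^N y_k\le u$ for every $N$.

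Next I would invoke $\sigma$-order completeness. The increasing sequence of partial sums $\bigl(\sum_{k=1}^N y_k\bigr)_N$ is bounded above by $u$, so its supremum $s:=\sup_N\sum_{k=1}^N y_k$ exists in $X$ and satisfies $s\le u$. Testing against an arbitrary positive functional $f\in X'_+$ yields $\sum_{k=1}^N f(y_k)=f\bigl(\sum_{k=1}^N y_k\bigr)\le f(s)$ for all $N$; hence the nonnegative series $\sum_k f(y_k)$ has bounded partial sums, therefore converges, which forces $f(y_k)\to 0$. Since the seminorms generating $|\sigma|(X,X')$ have the form $x\mapsto |f|(|x|)$ with $|f|\in X'_+$, and $|y_k|=y_k$, this says precisely that $|x_n|\wedge u=y_n\to 0$ in $|\sigma|(X,X')$. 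As $u\in X_+$ was arbitrary, $(x_n)$ is $u|\sigma|(X,X')$-null.

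Finally, being simultaneously $\tau$-bounded and $u|\sigma|(X,X')$-null, the sequence $(x_n)$ meets the hypothesis of {\bf LSUBSP}, which supplies a subsequence whose Ces\`{a}ro means are $\tau$-convergent; this closes the verification of {\bf LSDBSP}. The only delicate step I foresee is the passage from the order bound $\sum_{k=1}^N y_k\le u$ to weak nullity, and this is exactly where $\sigma$-order completeness is indispensable: without it the series $\sum_k y_k$ need not admit a supremum in $X$, and the summability argument against positive functionals would have nothing to bound it.
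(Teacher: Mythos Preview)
Your argument is correct, but it is genuinely different from the paper's. The paper argues by a dichotomy on the pre-Lebesgue property: if $X$ has it, a cited result of Taylor gives that every disjoint sequence is $u\tau$-null, hence $u|\sigma|(X,X')$-null, and {\bf LSUBSP} applies; if $X$ lacks it, then (using $\sigma$-order completeness) $\ell_\infty$ lattice-embeds in $X$, which is shown to be incompatible with {\bf LSUBSP}, so the implication holds vacuously. You instead give a direct, self-contained proof that every disjoint sequence is $u|\sigma|(X,X')$-null via the summability estimate $\sum_{k=1}^{N} f(y_k)=f\bigl(\bigvee_{k=1}^{N}y_k\bigr)\le f(u)$ for $f\in X'_+$, which avoids both external citations and the case split.

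One remark on your closing comment: $\sigma$-order completeness is \emph{not} actually needed in your argument. You introduce $s=\sup_N\sum_{k=1}^{N}y_k$ only to bound $\sum_{k=1}^{N}f(y_k)\le f(s)$, but the cruder bound $\sum_{k=1}^{N}f(y_k)=f\bigl(\bigvee_{k=1}^{N}y_k\bigr)\le f(u)$ is already available and does the same job. So your route in fact establishes the implication {\bf LSUBSP}$\Rightarrow${\bf LSDBSP} in an arbitrary locally solid vector lattice, which is stronger than stated. By contrast, the paper's proof uses $\sigma$-order completeness essentially, since the $\ell_\infty$-embedding criterion in the non-pre-Lebesgue case requires it.
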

\begin{proof}
First, assume that $X$ possesses the pre-Lebesgue property. Suppose $(x_n)$ is a $\tau$-bounded disjoint sequence in $X$. By \cite[Theorem 4.2]{T}, $x_{n}\xrightarrow{u\tau}0$ so that $x_{n}\xrightarrow{u|\sigma|(X,X')}0$. Therefore, there exists an increasing sequence $(n_k)$ of indices such that the sequence $(\frac{1}{n}\Sigma_{k=1}^{n}x_{n_k})$ is $\tau$-convergent. Now, suppose $X$ does not have the pre-Lebesgue property. By \cite[Theorem 3.28]{AB1}, $\ell_{\infty}$ is lattice embeddable in $X$ so that $X$ can not have {\bf LSUBSP}. This would complete the proof.
\end{proof}
For the converse, we have the following.
\begin{theorem}\label{703}
Suppose $(X,\tau)$ is a $\sigma$-order complete locally convex-solid vector lattice which is also a  Fr$\acute{e}$chet space. Then {\bf LSDBSP} results {\bf LSUBSP} in $X$ if and only if $X$ does not contain a lattice copy of $\ell_{\infty}$.
\end{theorem}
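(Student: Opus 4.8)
The plan is to run the biconditional through the dictionary of \cite[Theorem 3.28]{AB1} already exploited in Proposition \ref{103}: for a $\sigma$-order complete locally solid vector lattice, \emph{not} containing a lattice copy of $\ell_\infty$ is the same as having the pre-Lebesgue property. Reading the statement this way, and keeping {\bf LSDBSP} as the working hypothesis, what must be shown is that a pre-Lebesgue such space has {\bf LSUBSP}, while a space carrying an $\ell_\infty$-copy cannot.

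For the ``only if'' direction I would argue contrapositively, reusing the second half of Proposition \ref{103}. If $X$ contains a lattice copy of $\ell_\infty$ then $X$ fails the pre-Lebesgue property, and the embedded copy furnishes a $\tau$-bounded, $u|\sigma|(X,X')$-null sequence none of whose subsequences has $\tau$-convergent Ces\`{a}ro means; thus {\bf LSUBSP} fails and {\bf LSDBSP} cannot produce it. This half is short, since the obstruction is exactly the one isolated in \cite[Theorem 3.28]{AB1}.

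The content lies in the ``if'' direction. Assume $X$ is pre-Lebesgue with {\bf LSDBSP}, and let $(x_n)$ be $\tau$-bounded and $u|\sigma|(X,X')$-null. I would first reduce to a disjoint sequence. For fixed $u\in X_+$ the sequence $y_n=|x_n|\wedge u$ is order bounded by $u$ and absolutely weakly null, and the hypotheses should force $y_n\xrightarrow{\tau}0$; that is, $(x_n)$ is $u\tau$-null. Because $(X,\tau)$ is a Fr\'{e}chet space, Lemma \ref{701} then supplies an increasing index sequence $(n_k)$ and a disjoint sequence $(d_k)$ with $x_{n_k}-d_k\xrightarrow{\tau}0$; since $(x_{n_k})$ is $\tau$-bounded and $x_{n_k}-d_k$ is $\tau$-null, $(d_k)$ is $\tau$-bounded and disjoint. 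Applying {\bf LSDBSP} to $(d_k)$ yields a further subsequence with $\tau$-convergent Ces\`{a}ro means. Finally I transfer this to $(x_{n_k})$: this is the one step where local convexity is used, for if $p$ is any continuous seminorm and $z_j\xrightarrow{\tau}0$ then $p(\frac{1}{m}\sum_{j=1}^{m}z_j)\le\frac{1}{m}\sum_{j=1}^{m}p(z_j)\to0$, so the Ces\`{a}ro means of the $\tau$-null sequence $(x_{n_k}-d_k)$ vanish in the limit and those of the selected subsequence of $(x_{n_k})$ converge, giving {\bf LSUBSP}.

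The hard part is the reduction opening the ``if'' direction, namely passing from $u|\sigma|(X,X')$-nullity to $u\tau$-nullity. The remark after Lemma \ref{701} shows this cannot be done for arbitrary $u|\sigma|$-null sequences, so $\tau$-boundedness together with the pre-Lebesgue and $\sigma$-order completeness hypotheses must be genuinely used to show that the order-bounded pieces $|x_n|\wedge u$ are $\tau$-null rather than merely absolutely weakly null. Securing this order-continuity (weak-compactness of order intervals) statement is the crux on which the disjointification, and hence the whole argument, depends.
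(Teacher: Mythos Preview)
Your proposal is correct and follows essentially the same route as the paper: translate ``no $\ell_\infty$ copy'' into the pre-Lebesgue property via \cite[Theorem 3.28]{AB1}, upgrade $u|\sigma|(X,X')$-nullity to $u\tau$-nullity (the paper invokes \cite[Theorem 6.17]{AB1} for exactly the step you flag as the crux), disjointify via Lemma \ref{701}, apply {\bf LSDBSP}, and transfer back through the Ces\`{a}ro means of a $\tau$-null sequence. You are in fact slightly more careful than the paper in noting that $(d_k)$ must be shown $\tau$-bounded before {\bf LSDBSP} applies, and in not assuming the disjoint Ces\`{a}ro limit is $0$.
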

\begin{proof}
 The direct implication is trivial since $\ell_{\infty}$ does not have {\bf LSUBSP}. For the other side, assume that $(x_n)$ is a $\tau$-bounded $u|\sigma|(X,X')$-null sequence in $X$. This means that for each $u\in X_{+}$, $x_{n}\wedge u\rightarrow 0$ absolutely weakly.
 By \cite[Theorem 6.17 and Theorem 3.28]{AB1}, $x_n\wedge u \xrightarrow{\tau}0$.  By Lemma \ref{701}, there exist an increasing sequence $(n_k)$ of indices and a disjoint sequence $(d_k)$ in $X$ such that $x_{n_k}-d_k\xrightarrow{\tau}0$. By passing to a subsequence, we may assume that $\lim \frac{1}{m}\Sigma_{k=1}^{m}d_k\rightarrow 0$. Now, the result follows from the following inequality; observe that when a sequence in a topological vector space is null, then so is its Ces\`{a}ro means.
\[\frac{1}{m}\Sigma_{k=1}^{m}x_{n_k}-\frac{1}{m}\Sigma_{k=1}^{m}d_k= \frac{1}{m}\Sigma_{k=1}^{m}(x_{n_k}-d_k)\rightarrow 0.\]
\end{proof}
It is clear that {\bf LSUBSP} implies {\bf LSSUBSP}. For the converse, we have the following characterization.
\begin{theorem}
Suppose $(X,\tau)$ is a $\sigma$-order complete  locally convex-solid vector lattice. Then {\bf LSSUBSP} in $X$ implies  {\bf LSUBSP} if and only if $X$ does not contain a lattice copy of $\ell_{\infty}$.
\end{theorem}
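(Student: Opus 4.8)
The plan is to prove the two implications separately, the forward one being essentially a restatement of the obstruction already isolated in Theorem~\ref{703} and Proposition~\ref{103}, and the reverse one carrying the real content. For the forward direction I would argue by contraposition and simply record that $\ell_\infty$ has {\bf LSSUBSP} but fails {\bf LSUBSP}, so that a lattice copy of $\ell_\infty$ is incompatible with the implication ``{\bf LSSUBSP}$\Rightarrow${\bf LSUBSP}''. The failure of {\bf LSUBSP} is the same one exploited in Proposition~\ref{103}; the presence of {\bf LSSUBSP} in $\ell_\infty$ is immediate from the strong order unit $\one$: if $(x_n)$ is $\tau$-bounded and $u\tau$-null in $\ell_\infty$, then choosing a scalar $M$ with $\abs{x_n}\le M\one$ gives $\abs{x_n}=\abs{x_n}\wedge M\one\xrightarrow{\tau}0$, so the Ces\`aro means of $(x_n)$ converge trivially. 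This is the content of the author's remark that the direct implication holds ``since $\ell_\infty$ does not have {\bf LSUBSP}''.

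For the reverse, and main, implication I would assume that $X$ contains no lattice copy of $\ell_\infty$ and that $X$ has {\bf LSSUBSP}, and then verify {\bf LSUBSP} directly. Let $(x_n)$ be an arbitrary $\tau$-bounded $u\abs{\sigma}(X,X')$-null sequence, so that $\abs{x_n}\wedge u\to 0$ absolutely weakly for every $u\in X_+$. The key step is to upgrade this absolute-weak nullity to $\tau$-nullity of the order-bounded truncations. For this I would reuse verbatim the argument from the proof of Theorem~\ref{703}: since $X$ is $\sigma$-order complete, locally convex-solid, and free of a lattice copy of $\ell_\infty$, Theorem~3.28 of \cite{AB1} provides the pre-Lebesgue property, and together with Theorem~6.17 of \cite{AB1} this forces $\abs{x_n}\wedge u\xrightarrow{\tau}0$ for each $u\in X_+$.

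Consequently $(x_n)$ is in fact $u\tau$-null while remaining $\tau$-bounded, and now the hypothesis {\bf LSSUBSP} applies word for word: $(x_n)$ has a subsequence $(x_{n_k})$ whose Ces\`aro means $\frac1n\sum_{k=1}^{n}x_{n_k}$ are $\tau$-convergent. As $(x_n)$ was an arbitrary $\tau$-bounded $u\abs{\sigma}(X,X')$-null sequence, this is exactly {\bf LSUBSP}. I would also note that, unlike in Theorem~\ref{703}, no metrizability (Fr\'echet) hypothesis is needed here, precisely because we never pass through the disjointification of Lemma~\ref{701}; the conversion to a $u\tau$-null sequence lets {\bf LSSUBSP} finish on its own.

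I expect the only real obstacle to be that convergence upgrade---passing from absolute-weak nullity to $\tau$-nullity of the order-bounded sequences $\abs{x_n}\wedge u$---since this is the single place where the absence of $\ell_\infty$ enters. I would take care to confirm that the cited results of \cite{AB1} genuinely deliver $\tau$-convergence (and not merely $\tau$-Cauchyness) for order-bounded, absolutely-weakly-null sequences under these structural assumptions, as the whole proof hinges on that implication.
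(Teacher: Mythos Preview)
Your proposal is correct and follows essentially the same route as the paper's own proof: for the reverse direction you invoke \cite[Theorem~3.28]{AB1} to obtain the pre-Lebesgue property from the absence of $\ell_\infty$, then \cite[Theorem~6.17]{AB1} to upgrade $u\abs{\sigma}(X,X')$-nullity to $u\tau$-nullity, and finally apply {\bf LSSUBSP} directly---exactly as the paper does. The only difference is cosmetic: where the paper cites \cite[Theorem~2.3]{KMT} for the fact that $\ell_\infty$ has {\bf LSSUBSP}, you supply the elementary strong-order-unit argument yourself, which is precisely the content of that cited result.
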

\begin{proof}
Note that $\ell_{\infty}$ possesses {\bf LSSUBSP} by \cite[Theorem 2.3]{KMT} but it fails to have {\bf LSUBSP}; in this case, it should have the weak Banach-Saks property which is not possible.
Now, suppose $X$ does not contain a lattice copy of $\ell_{\infty}$. By \cite[Theorem 3.28]{AB1}, $X$ possesses the pre-Lebesgue property. Now, suppose $(x_n)\subseteq X$ is a bounded sequence which is $u|\sigma|(X,X')$-null. Therefore, by \cite[Theorem 6.17]{AB1}, $(x_n)$ is also $u\tau$-null. By the assumption, there exists a subsequence $(x_{n_k})$ of $(x_n)$ whose Ces\`{a}ro means is convergent, as claimed.
\end{proof}

\begin{proposition}
Suppose a topologically complete locally convex-solid vector lattice $(X,\tau)$ possesses {\bf LSBSP}. Then $(X,\tau)$ possesses the Lebesgue and Levi properties.
\end{proposition}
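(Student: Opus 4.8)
The plan is to mirror, in the locally solid setting, the classical chain for Banach lattices in which the Banach--Saks property forces reflexivity and hence the KB--property, i.e. order continuity (Lebesgue) together with monotone completeness (Levi). Concretely, I would deduce the two conclusions in three stages: first that \textbf{LSBSP} forces the pre-Lebesgue property, then that topological completeness upgrades this to the Lebesgue property, and finally that \textbf{LSBSP}, the Lebesgue property and completeness together yield the Levi property. It is worth noting in advance that no order-completeness hypothesis is assumed, so the suprema demanded by the Levi property will have to be produced analytically, as limits of Ces\`{a}ro means rather than by an a priori lattice-completeness.

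For the pre-Lebesgue property I would argue by contraposition, exactly as in Proposition \ref{103}: if $(X,\tau)$ failed the pre-Lebesgue property then, by \cite[Theorem 3.28]{AB1}, $\ell_\infty$ would be lattice embeddable in $X$. Since such a copy is topologically isomorphic to the complete space $\ell_\infty$, it is a closed sublattice of $X$, so it suffices to recall that $\ell_\infty$ itself fails the ordinary Banach--Saks property (its summing basis is norm bounded yet no subsequence is Ces\`{a}ro convergent); transporting that bounded sequence into the closed copy contradicts \textbf{LSBSP}. Once the pre-Lebesgue property is in hand, the Lebesgue property follows from completeness: given $u_\alpha\downarrow 0$, fix an index $\alpha_0$ and set $v_\alpha=u_{\alpha_0}-u_\alpha$ for $\alpha\ge\alpha_0$; then $0\le v_\alpha\uparrow\le u_{\alpha_0}$, so $(v_\alpha)$ is $\tau$-Cauchy by the pre-Lebesgue property and hence $\tau$-convergent by completeness. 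Thus $(u_\alpha)$ converges, and because the positive cone of a Hausdorff locally solid lattice is $\tau$-closed its limit must be $\inf_\alpha u_\alpha=0$.

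The Levi property is the substantial part, and I would prove it in two steps. The sequential core uses \textbf{LSBSP} directly: given an increasing $\tau$-bounded sequence $0\le u_1\le u_2\le\cdots$, choose by \textbf{LSBSP} a subsequence whose Ces\`{a}ro means $w_m=\frac1m\sum_{k=1}^m u_{n_k}$ converge to some $u$. Averaging the inequalities $u_{n_k}\ge u_{n_j}$ for $k\ge j$ gives $w_m\ge\frac{m-j+1}{m}u_{n_j}$, and letting $m\to\infty$ with the cone closed shows $u\ge u_{n_j}$ for every $j$; a symmetric averaging of $z-u_{n_k}\ge0$ shows $u$ is the least upper bound, so $u=\sup_n u_n$. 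Then $u-u_n\downarrow 0$ and the Lebesgue property gives $u_n\xrightarrow{\tau}u$. To pass to an arbitrary $\tau$-bounded upward directed set $D\subseteq X_+$, I regard $D$ as an increasing net and show it is $\tau$-Cauchy. If it were not, there would be a solid zero neighbourhood $V$ and, by repeated use of directedness, an increasing sequence $(d_n)\subseteq D$ with $d_{n+1}-d_n\notin V$ for all $n$: for any witnesses $\beta,\gamma$ of non-Cauchyness one has $|d_\beta-d_\gamma|\le d_{n+1}-d_n$ as soon as $d_{n+1}$ dominates both, so solidity of $V$ forces the gap out of $V$. This increasing $\tau$-bounded sequence would converge by the sequential core, contradicting $d_{n+1}-d_n\notin V$. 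Hence the net is $\tau$-Cauchy, so by completeness it converges, and closedness of the cone identifies its limit as $\sup D$.

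The main obstacle is precisely this last passage from sequences to directed nets, since \textbf{LSBSP} is an intrinsically sequential hypothesis while $(X,\tau)$ need not be metrizable; the resolution is the solidity extraction above, which converts the failure of the Cauchy condition for the net into an increasing sequence with gaps bounded away from zero. Completeness is the crucial extra ingredient, used to turn Cauchyness into convergence in both the Lebesgue and the Levi steps, while local convexity enters essentially through the cited embedding theorem and the ambient Banach--Saks framework. Before finalizing I would verify only that \cite[Theorem 3.28]{AB1} applies verbatim under the present hypotheses, that the embedded copy of $\ell_\infty$ is genuinely closed, and that none of the order-theoretic steps secretly require more than local solidity and Hausdorffness.
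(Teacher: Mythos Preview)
Your argument is correct in outline but follows a completely different route from the paper. The paper dispatches the result in two lines by contraposition: if either the Lebesgue or the Levi property fails in a topologically complete locally convex-solid vector lattice, then by Wnuk's theorem \cite[Theorem~1]{W} the space contains a lattice copy of $c_0$, and $c_0$ manifestly fails \textbf{LSBSP}. All the structural work is thus outsourced to Wnuk's embedding result. Your approach, by contrast, is self-contained: you first obtain pre-Lebesgue, upgrade to Lebesgue via completeness, and then construct the Levi suprema explicitly as Ces\`{a}ro limits, with a careful sequences-to-nets reduction using solidity. This is much longer, but it has the merit of showing \emph{how} \textbf{LSBSP} actually manufactures the missing suprema rather than invoking a black-box dichotomy.

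One point to watch in your Step~1: the $\ell_\infty$-embedding conclusion you draw from \cite[Theorem~3.28]{AB1} is normally stated under $\sigma$-order completeness, which is not assumed in the present proposition (and indeed the paper itself only invokes that theorem when it has that extra hypothesis). You flag this yourself, and it is easily bypassed using your own sequential-core idea: if pre-Lebesgue fails, pick $0\le u_n\uparrow\le u$ non-Cauchy, apply \textbf{LSBSP} to get Ces\`{a}ro means $w_m\to w$ along a subsequence, observe that $w_m\le u_{n_m}\le w$ forces $u_{n_m}\to w$ by solidity, and then monotonicity squeezes the full sequence to $w$, contradicting non-Cauchyness. This repairs the step without any appeal to $\ell_\infty$.
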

\begin{proof}
Suppose not. By \cite[Theorem 1]{W}, $X$ contains a lattice copy of $c_0$. This means that $X$ fails to have {\bf LSBSP}.
\end{proof}
\begin{proposition}\label{6000}
Suppose $(X,\tau)$ is an atomic locally solid vector lattice which possesses the Lebesgue and Levi properties. Then {\bf LSSUBSP} in $X$ implies {\bf LSBSP}.
\end{proposition}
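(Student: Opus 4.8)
The plan is to reduce an arbitrary $\tau$-bounded sequence to the $u\tau$-null situation that {\bf LSSUBSP} already handles, by extracting a subsequence that splits into a $\tau$-convergent part plus a disjoint part. Concretely, given a $\tau$-bounded sequence $(x_n)\subseteq X$, I would first establish a \emph{subsequence splitting}: there exist a subsequence $(x_{n_k})$, an element $u\in X$, and a disjoint sequence $(v_k)$ such that $x_{n_k}=u_k+v_k$ with $u_k\xrightarrow{\tau}u$. Here the atomic hypothesis supplies a coordinate system indexed by the atoms, the Lebesgue property guarantees order continuity (so each element has countable support on the atoms and order-null dominated nets are $\tau$-null), and the Levi property guarantees that the coordinatewise limit genuinely defines an element of $X$.

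To build the splitting I would argue as follows. The countably many terms of $(x_n)$ are supported on a countable family of atoms $\{a_1,a_2,\ldots\}$; each coordinate sequence is a bounded scalar sequence, so a diagonal argument yields a subsequence along which every coordinate converges. The Levi property assembles these coordinate limits into an element $u\in X$, and the residual sequence $w_k:=x_{n_k}-u$ is $\tau$-bounded and converges to $0$ in each coordinate. A gliding-hump procedure then produces the disjoint sequence: for fixed $w_{k}$, the Lebesgue property forces its tail beyond a large index of atoms to be $\tau$-small, while the truncation of $w_k$ to a fixed finite block of atoms tends to $0$ in $\tau$ as $k\to\infty$. Choosing atom-cutoffs $0=m_0<m_1<m_2<\cdots$ and indices $k_1<k_2<\cdots$ compatibly, I let $v_j$ be the restriction of $w_{k_j}$ to the block of atoms $(m_{j-1},m_j]$; this makes $(v_j)$ disjoint and $\tau$-bounded with $w_{k_j}-v_j\xrightarrow{\tau}0$. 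Setting $u_j:=u+(w_{k_j}-v_j)$ gives the decomposition with $u_j\xrightarrow{\tau}u$.

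Once the splitting is in hand the argument is short. Since the Lebesgue property entails the pre-Lebesgue property, the $\tau$-bounded disjoint sequence $(v_j)$ is $u\tau$-null by \cite[Theorem 4.2]{T}. Hence {\bf LSSUBSP} furnishes a subsequence $(v_{j_i})$ whose Ces\`{a}ro means are $\tau$-convergent. Along this same subsequence $u_{j_i}\xrightarrow{\tau}u$, so its Ces\`{a}ro means converge to $u$ (the Ces\`{a}ro means of a convergent sequence in a topological vector space converge to its limit, exactly as used in the proof of Theorem \ref{703}). Adding the two, the Ces\`{a}ro means of $x_{n_{k_{j_i}}}=u_{j_i}+v_{j_i}$ are $\tau$-convergent, which is precisely {\bf LSBSP}.

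The main obstacle is the subsequence splitting lemma in this locally solid generality: in the Banach-lattice setting it is the classical subsequence splitting property for atomic order-continuous lattices, but here it must be reproved using only the locally solid topology, with care taken that the gliding-hump truncations converge in $\tau$ (this is where the Lebesgue property is used in full strength) and that the coordinatewise limit lies in $X$ (the role of the Levi property). All remaining steps---passing Ces\`{a}ro means through a convergent sequence and recognizing $\tau$-bounded disjoint sequences as $u\tau$-null---are routine given the earlier results.
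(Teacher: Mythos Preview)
Your route differs substantially from the paper's. The paper's argument is two lines: by \cite[Theorem~6]{DEM}, in an atomic locally solid vector lattice with the Lebesgue and Levi properties every $\tau$-bounded sequence already admits a $u\tau$-convergent subsequence; after subtracting the $u\tau$-limit one has a $\tau$-bounded $u\tau$-null sequence, and {\bf LSSUBSP} finishes the job. What you propose is, in effect, a from-scratch proof of that cited compactness result via a coordinatewise diagonal extraction followed by a gliding hump---considerably more work, but self-contained if it goes through.

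There is, however, a genuine gap. You claim that the Lebesgue property forces each element to have countable support on the atoms, so that the countably many terms $x_n$ jointly live on countably many atoms and a diagonal argument becomes available. This fails in the stated generality. Take $X=\mathbb{R}^{I}$ with the product (pointwise) topology for an uncountable index set $I$: this space is Hausdorff, locally solid, atomic, and enjoys both the Lebesgue and the Levi properties, yet every nowhere-vanishing element has uncountable support, and a bounded sequence such as $x_n(t)=\text{the }n\text{-th binary digit of }t$ (for $I=[0,1]$) is not carried by countably many atoms. Without a countable coordinate system neither the diagonal extraction nor the choice of finite atom-blocks $m_{j-1}<m_j$ in your gliding hump makes sense. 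If you add a countability hypothesis---for instance metrizability of $\tau$, which does force countable supports---your splitting construction can be carried out and gives a self-contained argument; as written, the step ``Lebesgue $\Rightarrow$ countable support'' is where the proof breaks.
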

\begin{proof}
Suppose $(x_n)$ is a $\tau$-bounded sequence in $X$. By \cite[Theorem 6]{DEM}, there exists a subsequence $(x_{n_k})$ of $(x_n)$ which is $u\tau$-convergent. By the assumption, there is a sequence $(n_{k_{k'}})$ of indices such that the sequence $(x_{n_{k_{k'}}})$ has a convergent Ces\`{a}ro means. This would complete the proof.
\end{proof}
\begin{corollary}
Suppose $(X,\tau)$ is an atomic locally solid vector lattice which possesses the Lebesgue and Levi properties. Then {\bf LSUBSP} in $X$ implies {\bf LSBSP}.
\end{corollary}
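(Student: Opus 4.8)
The plan is to obtain this corollary as an immediate composite of two facts already on record, so no fresh machinery is needed. The hypotheses on $X$ --- atomic, Lebesgue, and Levi --- are precisely those of Proposition~\ref{6000}, which asserts that {\bf LSSUBSP} implies {\bf LSBSP}. It therefore suffices to interpose the unconditional implication {\bf LSUBSP}$\,\Rightarrow\,${\bf LSSUBSP}, recorded just before the characterization theorem above, and chain the two together: {\bf LSUBSP}$\,\Rightarrow\,${\bf LSSUBSP}$\,\Rightarrow\,${\bf LSBSP}.

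The only point that genuinely requires justification is why {\bf LSUBSP} forces {\bf LSSUBSP} in an arbitrary locally solid vector lattice. The key is that $\tau$ is finer than the absolute weak topology $|\sigma|(X,X')$. Indeed, for each $f\in X'$ its modulus $|f|$ again lies in $X'$, since the topological dual of a locally solid vector lattice is an ideal in its order dual; and as the lattice operations are $\tau$-continuous, the generating seminorm $x\mapsto |f|(|x|)$ of $|\sigma|(X,X')$ is $\tau$-continuous. Hence $\tau$-convergence entails $|\sigma|(X,X')$-convergence, and consequently every $u\tau$-null sequence is $u|\sigma|(X,X')$-null. In particular, any $\tau$-bounded $u\tau$-null sequence satisfies the hypothesis of {\bf LSUBSP}, which supplies a subsequence with $\tau$-convergent Ces\`{a}ro means; this is exactly {\bf LSSUBSP}.

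Alternatively, one can bypass the intermediate property and argue directly, essentially reproducing the proof of Proposition~\ref{6000} with {\bf LSUBSP} in place of {\bf LSSUBSP}. Given a $\tau$-bounded sequence $(x_n)$, atomicity together with the Lebesgue and Levi properties yields, via \cite[Theorem 6]{DEM}, a $u\tau$-convergent subsequence, say with limit $x$; then $(x_{n_k}-x)$ is $\tau$-bounded and $u\tau$-null, hence $u|\sigma|(X,X')$-null by the comparison above, so {\bf LSUBSP} extracts a further subsequence with $\tau$-convergent Ces\`{a}ro means, and restoring the constant $x$ preserves convergence of the means. I do not anticipate any real obstacle: the corollary is a formal consequence of Proposition~\ref{6000}, and the single non-formal ingredient is the standard containment $|\sigma|(X,X')\subseteq\tau$, for which the only care needed is to confirm that $|f|\in X'$ whenever $f\in X'$.
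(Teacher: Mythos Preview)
Your proposal is correct and follows essentially the same route as the paper: the corollary is stated without proof there, being an immediate consequence of Proposition~\ref{6000} together with the observation (recorded just before the subsequent theorem) that {\bf LSUBSP} implies {\bf LSSUBSP}. Your added justification of this implication via the containment $|\sigma|(X,X')\subseteq\tau$ is sound and simply makes explicit what the paper deems ``clear.''
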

Observe that atomic assumption is necessary in Proposition \ref{6000} and can not be omitted; consider $X=L^1[0,1]$ with the norm topology. It possesses the Lebesgue and Levi properties but it is not atomic. Suppose $(f_n)$ is a bounded sequence in $X$ which is $un$-convergent to zero; by \cite[Corollary 4.2]{DOT}, it is convergent in measure. By \cite[Theorem 1.82]{AA}, there exists a subsequence $(f_{{n}_k})$ of $(f_{n})$ which is also pointwise convergent. By \cite[Corollary 1.86]{AA}, it is uniformly relatively convergent to zero. This implies that the Ces\`{a}ro means of this subsequence is convergent. So, $X$ possesses {\bf LSSUBSP}; nevertheless, it fails to have {\bf LSBSP}, certainly.

\end{document}